\newtheorem{theorem}{Theorem}[section]
\newtheorem{proposition}[theorem]{Proposition}
\theoremstyle{definition}
\begin{document}

\title[Strichartz estimates from decoupling]{On Strichartz estimates from $\ell^2$-decoupling and applications}

\author{Robert Schippa}

\address{Department of Mathematics, Karlsruhe Institute of Technology, 76131, Karlsruhe, Germany} \email{robert.schippa@kit.edu}

%
%
\maketitle

\begin{abstract} Strichartz estimates are derived from $\ell^2$-decoupling for phase functions satisfying a curvature condition. Bilinear refinements without loss in the high frequency are discussed. Estimates are established from uniform curvature generalizing Galilean invariance or from transversality in one dimension. The bilinear refinements are utilized to prove local well-posedness for generalized cubic nonlinear Schr\"odinger equations.
 \end{abstract}

\section{Introduction}
We point out how $\ell^2$-decoupling implies Strichartz estimates for non-degenerate phase functions on tori $\mathbb{T}^n = (\mathbb{R}/2 \pi \mathbb{Z})^n$. These estimates apply to solutions to linear dispersive equations
\begin{equation}
\label{eq:dispersivePDE}
\left\{\begin{array}{cl}
i\partial_t u + \varphi(\nabla/i) u &= 0 ,  \; (t,x) \in \mathbb{R} \times \mathbb{T}^n, \\
u(0) &= u_0, \end{array} \right.
\end{equation}
where $\varphi \in C^2(\mathbb{R}^n,\mathbb{R})$.\\
The eigenvalues of $D^2 \varphi(\xi)$ are denoted by $\{ \gamma_1(\xi),\ldots,\gamma_n(\xi)\}$ and we set
\begin{equation*}
\sigma_\varphi(\xi) = \min( \{ \# neg. \gamma_i(\xi), \# pos. \gamma_i(\xi) \}).
\end{equation*}
The non-degeneracy hypothesis we assume reads as follows:
\begin{align}
\label{eq:EigenvalueBounds}
\exists &\psi: 2^{\mathbb{N}_0} \rightarrow \mathbb{R}^{>0} \text{ such that } \min(|\gamma_i(\xi)|) \sim \max(|\gamma_i(\xi)|) \sim \psi(N), \quad |\xi| \in [N,2N) \tag{$\mathcal{E}^{\sigma_\varphi} (\psi)$} \\
&\text{ and } \sigma_\varphi(\xi) \text{ is independent of } \xi. \notag
\end{align}
By $P_N$ we denote the frequency projector
\begin{equation*}
(P_N f) \widehat (\xi) = 
\begin{cases}
1_{[N,2N)}(|\xi|) \hat{f}(\xi), \quad N \in 2^{\mathbb{N}_0}, \\
1_{[0,1)}(|\xi|) \hat{f}(\xi), \quad N=0 .
\end{cases}
\end{equation*}
The Strichartz estimates we will prove read
\begin{equation}
\label{eq:frequencyLocalizedStrichartzEstimate}
\Vert P_N e^{it \varphi(\nabla/i)} u_0 \Vert_{L^p(I \times \mathbb{T}^n)} \lesssim |I|^{1/p} N^{s(\varphi)} \Vert P_N u_0 \Vert_{L^2}.
\end{equation}

To prove \eqref{eq:frequencyLocalizedStrichartzEstimate}, we will use $\ell^2$-decoupling (cf. \cite{BourgainDemeter2015,BourgainDemeter2017}), more precisely, (variants of) the discrete $L^2$-restriction theorem. This was carried out in \cite{BourgainDemeter2015,BourgainDemeter2017} in the special cases of $\varphi(\xi) = \sum_{i=1}^n \alpha_i \xi_i^2$, $\alpha_i \in \mathbb{R} \backslash 0$. The modest generalization will clarify the role of the asymptotic behaviour of the eigenvalues of $D^2 \varphi$, i.e., the curvature of the characteristic surface of \eqref{eq:dispersivePDE}.

The following proposition is proved:
\begin{proposition}
\label{prop:linearStrichartzEstimates}
Suppose that $\varphi$ satisfies $(\mathcal{E}^k(\psi))$ and let $I \subseteq \mathbb{R}$ be a compact interval. Then, we find the following estimates to hold for any $\varepsilon > 0$:
\begin{equation}
\label{eq:hyperbolicStrichartzEstimates}
\Vert P_N e^{it \varphi(\nabla/i)} u_0 \Vert_{L^p(I \times \mathbb{T}^n)} \lesssim_\varepsilon |I|^{1/p}
\frac{N^{\left( \frac{n}{2} - \frac{n+2}{p} \right)+\varepsilon}}{(\min(\psi(N),1))^{1/p}}, \quad \frac{2(n+2-k)}{n-k} \leq p < \infty.
\end{equation}
\end{proposition}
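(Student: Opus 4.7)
The plan is to reduce, via a time rescaling, to the unit-curvature discrete $L^2$-restriction theorem of Bourgain-Demeter~\cite{BourgainDemeter2015,BourgainDemeter2017}. First, by time-translation invariance and summation over unit subintervals, I may assume $|I|=1$, extracting the $|I|^{1/p}$ factor. Write
\[
P_N e^{it\varphi(\nabla/i)}u_0(x) = \sum_{\xi\in\Z^n,\,|\xi|\in[N,2N)} a_\xi\,e^{i(x\cdot\xi + t\varphi(\xi))}, \qquad a_\xi = \widehat{u_0}(\xi),
\]
so that $\|a\|_{\ell^2} = \|P_N u_0\|_{L^2}$. Setting $\tilde\varphi := \varphi/\psi(N)$ and $v(\tau,x) := (P_N e^{i\tau\tilde\varphi(\nabla/i)}u_0)(x)$, one has $u(t,x) = v(\psi(N)\,t, x)$; by the hypothesis $\mathcal{E}^k(\psi)$, $D^2\tilde\varphi$ has eigenvalues uniformly of unit size and fixed signature $(k, n-k)$ on $\{|\xi|\in[N, 2N)\}$.

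The core step is the unit-curvature estimate: for $p_c := 2(n+2-k)/(n-k)$,
\[
\|v\|_{L^{p_c}([0,1]\times\T^n)} \lesssim_\varepsilon N^{n/2 - (n+2)/p_c + \varepsilon}\,\|P_N u_0\|_{L^2}.
\]
For quadratic $\tilde\varphi = \sum_i \alpha_i \xi_i^2$ this is precisely the discrete $L^2$-restriction of~\cite{BourgainDemeter2015} (elliptic, $k=0$) and~\cite{BourgainDemeter2017} (hyperbolic, $k\geq 1$), obtained from the corresponding $\ell^2$-decoupling for the paraboloid of signature $(k, n-k)$. Interpolation with the trivial bound $\|v\|_{L^\infty} \leq \|a\|_{\ell^1} \lesssim N^{n/2}\|P_N u_0\|_{L^2}$ then extends this to $\|v\|_{L^p([0,1]\times\T^n)} \lesssim_\varepsilon N^{n/2 - (n+2)/p + \varepsilon}\|P_N u_0\|_{L^2}$ for all $p \geq p_c$. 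To conclude, I reverse the time rescaling: since $u(t,x) = v(\psi(N)t, x)$, $\|u\|_{L^p([0,1]\times\T^n)}^p = \psi(N)^{-1}\|v\|_{L^p([0,\psi(N)]\times\T^n)}^p$. When $\psi(N) \geq 1$, partitioning $[0, \psi(N)]$ into $\lceil\psi(N)\rceil$ unit subintervals and applying the unit-time estimate on each (with initial data $e^{ij\tilde\varphi(\nabla/i)}P_N u_0$, which has the same $L^2$-norm) yields a multiplicity $\sim \psi(N)$ that cancels the $\psi(N)^{-1}$ prefactor. When $\psi(N) < 1$, embedding $[0, \psi(N)]\subset[0,1]$ and applying the unit-time estimate retains the factor $\psi(N)^{-1/p}$. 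Combined, these two cases produce the $(\min(\psi(N),1))^{-1/p}$ in~\eqref{eq:hyperbolicStrichartzEstimates}.

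The main obstacle is the core unit-curvature estimate for general non-quadratic $\tilde\varphi$. The $\ell^2$-decoupling theorems of~\cite{BourgainDemeter2015,BourgainDemeter2017} depend only on the uniform non-degeneracy of the Hessian and on its signature; thus one partitions $\{|\xi| \in [N, 2N)\}$ into caps on which $\tilde\varphi$ coincides with its osculating quadratic up to an admissible perturbation, applies the paraboloid decoupling cap by cap, and reassembles by $\ell^2$-orthogonality. The hyperbolic case $k \geq 1$ is the more subtle one: the positive Strichartz exponent $s_{p_c} = k/(n+2-k)$ is saturated by Knapp-type data concentrated along null directions of the quadratic form, and requires the full strength of the hyperbolic decoupling of~\cite{BourgainDemeter2017} rather than only the elliptic decoupling of~\cite{BourgainDemeter2015}.
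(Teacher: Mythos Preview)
Your proposal is correct and follows essentially the same strategy as the paper: both reduce to the discrete $L^2$-restriction theorem of \cite{BourgainDemeter2015,BourgainDemeter2017} by rescaling so that the Hessian of the phase becomes unit-size, and then distinguish the cases $\psi(N)\gtrsim 1$ and $\psi(N)\ll 1$ to track the $(\min(\psi(N),1))^{-1/p}$ loss. The only organizational difference is that the paper rescales space and time simultaneously (sending $\xi$ to the unit annulus with $1/N$-separation and the spacetime box to side $\sim T N^2 \psi(N)$, then invoking discrete $L^2$-restriction directly for all $p\ge p_c$), whereas you isolate the time rescaling $\tilde\varphi=\varphi/\psi(N)$ first, appeal to the unit-curvature Strichartz estimate on $[0,1]\times\T^n$ at $p=p_c$, interpolate with $L^\infty$, and then do the $\psi(N)$ bookkeeping by partitioning the rescaled time interval.
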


Recall that certain Strichartz estimates from \cite{Bourgain1993FourierRestrictionNormPhenomenaI,BourgainDemeter2015,BourgainDemeter2017} are known to be sharp up to endpoints. With the above proposition being a generalization, the Strichartz estimates proved above are also sharp in this sense. We shall also consider the example $\varphi(\xi) = |\xi|^a, \quad 1 < a<2$, where the proposition gives an additional loss of derivatives due to decreased curvature compared to the Schr\"odinger case.\\
When we consider the associated nonlinear Schr\"odinger equation, we shall see  why this additional loss does probably not admit relaxation. Moreover, as in \cite{BourgainDemeter2015,BourgainDemeter2017} there are estimates for $2 \leq p \leq \frac{2(n+2-k)}{n-k}$, which follow from interpolation.
In fact, as $p \neq 2$, Proposition \ref{prop:linearStrichartzEstimates} does not yield Strichartz estimates without loss of derivatives.

When we aim to apply these estimates to prove well-posedness of generalized cubic nonlinear Schr\"odinger equations
\begin{equation}
\label{eq:nonlinearSEQ}
\left\{\begin{array}{cl}
i\partial_t u + \varphi(\nabla/i) u &= \pm |u|^2 u ,  \; (t,x) \in \mathbb{R} \times \mathbb{T}^n, \\
u(0) &= u_0 \in H^s(\mathbb{T}^n), \end{array} \right.
\end{equation}
we will use orthogonality considerations to prove bilinear $L^2_{t,x}$-estimates for $High \times Low \rightarrow High$-interaction without loss of derivatives in the high frequency. In \cite[Theorem~3,~p.~193]{BurqGerardTzvetkov2005} was proved the following proposition to derive well-posedness to cubic Schr\"odinger equations on compact manifolds:
\begin{proposition}
\label{prop:orthogonalityWellposedness}
Let $u_0,v_0 \in L^2(\mathbb{T}^n)$, $K,N \in 2^{\mathbb{N}}$. If there exists $s_0 > 0$ such that
\begin{equation}
\label{eq:bilinearStrichartzEstimateI}
\Vert P_N e^{\pm it \varphi(\nabla/i)} u_0 P_K e^{\pm i t \varphi(\nabla/i)} v_0 \Vert_{L^2_{t,x}(I \times \mathbb{T}^n)} \lesssim |I|^{1/2} \min(N,K)^{s_0} \Vert P_N u_0 \Vert_{L^2} \Vert P_K v_0 \Vert_{L^2},
\end{equation}
where $I \subseteq \mathbb{R}$ is a compact time interval with $|I| \gtrsim 1$, then the Cauchy problem \eqref{eq:nonlinearSEQ} is locally well-posed in $H^s$ for $s>s_0$.
\end{proposition}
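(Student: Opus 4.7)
The plan is to carry out the classical $X^{s,b}$-argument, following the proof of \cite[Theorem~3]{BurqGerardTzvetkov2005}. First, I would introduce the Bourgain space $X^{s,b}$ adapted to $\varphi$,
\[
\|u\|_{X^{s,b}}^2 = \sum_{\xi \in \Z^n} \langle \xi \rangle^{2s} \int_\R \langle \tau - \varphi(\xi) \rangle^{2b} |\hat u(\tau,\xi)|^2 \, d\tau,
\]
together with its time-restricted variant $X^{s,b}_T$. I would fix exponents $0 < b' < 1/2 < b$ with $b + b' < 1$. The standard linear and Duhamel estimates in $X^{s,b}$ would then reduce the Cauchy problem \eqref{eq:nonlinearSEQ} to a fixed-point problem which closes as soon as one has the trilinear bound
\[
\|u_1 \bar u_2 u_3\|_{X^{s,-b'}} \lesssim \prod_{j=1}^3 \|u_j\|_{X^{s,b}}, \qquad s > s_0.
\]

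To prove this trilinear estimate I would dualize it to
\[
\Bigl|\int_{\R \times \T^n} u_1 \bar u_2 u_3 \bar u_4 \, dt\, dx \Bigr| \lesssim \|u_4\|_{X^{-s,b'}} \prod_{j=1}^3 \|u_j\|_{X^{s,b}},
\]
decompose each factor dyadically as $u_j = \sum_{N_j} P_{N_j} u_j$, and observe that for the four-fold integral to be non-zero the two largest among $\{N_1,N_2,N_3,N_4\}$ must be comparable. I would then apply Cauchy--Schwarz in $L^2_{t,x}$ to split the four-fold product into two bilinear $L^2_{t,x}$-products, pairing the largest frequency with the smallest and the remaining two together, and invoke the transfer principle (valid for $b > 1/2$) to lift the hypothesis \eqref{eq:bilinearStrichartzEstimateI} from free evolutions to $X^{0,b}$-functions. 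Each bilinear factor then contributes $\min(\cdot,\cdot)^{s_0}$, so the total loss over the quadruple is at most $N_{(3)}^{s_0} N_{(4)}^{s_0}$, where $N_{(1)} \geq \cdots \geq N_{(4)}$ are the sorted frequencies. This loss is absorbed by the $\langle \xi \rangle^s$ weights sitting on the two smallest frequencies as soon as $s > s_0$, and the ensuing dyadic summation closes by Cauchy--Schwarz in the dyadic indices.

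The main obstacle I expect is that the dual factor $u_4$ only lies in $X^{-s,b'}$ with $b' < 1/2$, so the transfer principle is not directly applicable to it. Following \cite{BurqGerardTzvetkov2005}, this is handled by exploiting the slack $b + b' < 1$ to redistribute the modulation weight $\langle \tau - \varphi(\xi) \rangle$ between the four factors, or equivalently by a dual bilinear transfer which moves $u_4$ onto the side where the transfer principle is available. Once the trilinear estimate is secured, a Banach fixed-point argument in a ball of $X^{s,b}_T$ will yield local existence, uniqueness, and continuous dependence on the initial data for $T = T(\|u_0\|_{H^s})$ sufficiently small, which is the content of the proposition.
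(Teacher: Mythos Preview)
Your proposal is correct and matches the paper's approach: the paper does not give its own proof of this proposition but simply attributes it to \cite[Theorem~3,~p.~193]{BurqGerardTzvetkov2005}, and what you have outlined is precisely the $X^{s,b}$-argument of Burq--G\'erard--Tzvetkov. There is nothing to add.
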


For $\varphi = \sum_{i=1}^n \alpha_i \xi^2$, \eqref{eq:bilinearStrichartzEstimateI} follows from almost orthogonality and the Galilean transformation (cf. \cite{Bourgain1993FourierRestrictionNormPhenomenaI,Wang2013}). It turns out that it is enough to require \eqref{eq:EigenvalueBounds} to hold for some uniform constant:
\begin{equation}
\label{eq:GlobalEigenvalueBounds}
\exists C_\varphi > 0 : \; \forall \xi \in \mathbb{R}^n : \quad \min(|\gamma_i(\xi)|) \sim \max(|\gamma_i(\xi)|) \sim C_\varphi. \tag{$\mathcal{E}^{\sigma_\varphi}(C_\varphi)$}
\end{equation}

This will be sufficient to generalize the Galilean transformation and prove the following:
\begin{proposition}
\label{prop:generalizedGalileanInvariance}
Suppose that $\varphi \in C^2(\mathbb{R}^n,\mathbb{R})$ satisfies $(\mathcal{E}^k(C_\varphi))$. Then, there is $s(n,k)$ such that we find the estimate
\begin{equation}
\label{eq:BilinearStrichartzEstimateII}
\Vert P_N e^{\pm it \varphi(\nabla/i)} u_0 P_K e^{\pm it \varphi(\nabla/i)} v_0 \Vert_{L^2_{t,x}(I \times \mathbb{T}^n)} \lesssim_{C_\varphi,s} K^{2s} |I|^{1/2} \Vert P_N u_0 \Vert_{L^2} \Vert P_K v_0 \Vert_{L^2}
\end{equation}
to hold for $s>s(n,k)$, where $I \subseteq \mathbb{R}$ denotes a compact time interval, $|I| \gtrsim 1$.
\end{proposition}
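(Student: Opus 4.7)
The plan is to generalize Bourgain's argument for the pure Schr\"odinger equation $\varphi(\xi) = \sum_i \alpha_i \xi_i^2$ by replacing exact Galilean invariance with an approximate change of variables that works under the uniform curvature hypothesis $(\mathcal{E}^k(C_\varphi))$. Without loss of generality assume $K \leq N$. First I would decompose the high-frequency annulus $\{|\xi| \in [N, 2N)\} \cap \Z^n$ into cubes $Q_j$ of side length $K$ with lattice centers $\xi_j \in K\Z^n$, and write $P_N u_0 = \sum_j u_{0,j}$ with $\widehat{u_{0,j}}$ supported in $Q_j$. Setting $u_j = e^{\pm it\varphi(\nabla/i)} u_{0,j}$ and $v = e^{\pm it\varphi(\nabla/i)} P_K v_0$, each product $u_j \cdot v$ has spatial Fourier support in $Q_j + B(0,2K)$, and these sets have bounded overlap as $j$ varies. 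Plancherel in $x$ then yields the almost orthogonality
\begin{equation*}
\Vert (P_N u)\, v \Vert_{L^2_{t,x}}^2 \lesssim \sum_j \Vert u_j\, v \Vert_{L^2_{t,x}}^2.
\end{equation*}

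For each summand I would invoke a generalized Galilean transformation. Writing $u_{0,j}(x) = e^{ix\cdot\xi_j} f_j(x)$ with $\widehat{f_j}$ supported in the cube $Q_j - \xi_j$ around the origin, the Taylor expansion
\begin{equation*}
\varphi(\xi_j + \zeta) = \varphi(\xi_j) + \nabla\varphi(\xi_j)\cdot\zeta + \tilde\varphi_j(\zeta)
\end{equation*}
leads directly to
\begin{equation*}
u_j(t,x) = e^{ix\cdot\xi_j + it\varphi(\xi_j)}\, \tilde u_j(t, x + t\nabla\varphi(\xi_j)), \qquad \tilde u_j := e^{\pm it\tilde\varphi_j(\nabla/i)} f_j,
\end{equation*}
with $\tilde u_j$ frequency-localized at scale $K$ near the origin. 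The crucial observation is that $D^2\tilde\varphi_j(\zeta) = D^2\varphi(\xi_j + \zeta)$, so under $(\mathcal{E}^k(C_\varphi))$ the shifted phase $\tilde\varphi_j$ satisfies the same eigenvalue hypothesis uniformly in $j$. An analogous computation shows that $\tilde v_j(t,y) := v(t, y + t\nabla\varphi(\xi_j))$ is a free solution with phase $\varphi(\xi) - \xi\cdot\nabla\varphi(\xi_j)$, whose Hessian is again $D^2\varphi$.

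Translation invariance of $\Vert\cdot\Vert_{L^2_{t,x}}$ in the $x$-variable then reduces $\Vert u_j\, v\Vert_{L^2_{t,x}}$ to $\Vert \tilde u_j\, \tilde v_j\Vert_{L^2_{t,x}}$, a bilinear expression in two functions at frequency scale $K$, each solving a dispersive equation satisfying $(\mathcal{E}^k(C_\varphi))$. H\"older's inequality and Proposition~\ref{prop:linearStrichartzEstimates} applied at an admissible exponent---$p=4$ when $n-k \geq 2$, otherwise a larger $p \geq 2(n+2-k)/(n-k)$ combined with interpolation against the trivial $L^2_{t,x}$ bound---yield
\begin{equation*}
\Vert \tilde u_j\, \tilde v_j\Vert_{L^2_{t,x}} \lesssim_{C_\varphi,\varepsilon} |I|^{1/2}\, K^{2s(n,k) + \varepsilon}\, \Vert u_{0,j}\Vert_{L^2} \Vert P_K v_0\Vert_{L^2}.
\end{equation*}
Summing over $j$ via Plancherel $\sum_j \Vert u_{0,j}\Vert_{L^2}^2 = \Vert P_N u_0\Vert_{L^2}^2$ then closes \eqref{eq:BilinearStrichartzEstimateII}.

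The main difficulty I anticipate is ensuring that the Galilean reduction genuinely preserves the curvature hypothesis with constants independent of the cube $Q_j$: this is precisely where the uniform constant $C_\varphi$ in $(\mathcal{E}^k(C_\varphi))$ becomes indispensable, since a frequency-dependent $\psi(N)$ would pass through the shift and produce an unwanted $N$-dependence in the implicit constants. The remaining bookkeeping---the choice of an admissible $p$, the dyadic subdecomposition of $f_j$ if needed, and the distribution of $L^2$-mass across the two factors via H\"older---is essentially routine once the reduction to scale $K$ has been carried out.
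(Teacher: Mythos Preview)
Your proposal is correct and follows essentially the same route as the paper: decompose $P_N$ into cubes of side $K$, use almost orthogonality of the products, then perform the generalized Galilean shift $\xi\mapsto\xi_j+\xi'$ to reduce each high-frequency piece to a propagator at scale $K$ with phase whose Hessian is still $D^2\varphi$, and finish with H\"older and Proposition~\ref{prop:linearStrichartzEstimates}. The only cosmetic difference is that the paper applies H\"older first and shifts only the $u$-factor, whereas you shift the product and then split---since the time-dependent spatial translation preserves each $L^4_{t,x}$ norm separately, the two orderings are equivalent (note a harmless sign: your $\tilde v_j$ should be $v(t,y-t\nabla\varphi(\xi_j))$, but the Hessian is unaffected either way).
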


This bilinear improvement can also stem from transversality: In \cite{Hani2012,MoyuaVega2008,Schippa2020} short-time bilinear Strichartz estimates were discussed and the following transversality condition played a crucial role in the derivation of the estimates:
\begin{align}
\label{eq:transversalityCondition}
&\text{ There is } \alpha >0 \text{ so that } |\nabla \varphi(\xi_1) \pm \nabla \varphi(\xi_2)| \sim N^{\alpha} \text{ whenever } |\xi_1| \sim K, \; |\xi_2| \sim N, \tag{$\mathcal{T}_\alpha$}
 \\
&K \ll N, K,N \in 2^{\mathbb{N}}. \notag
\end{align}
The corresponding short-time estimate reads
\begin{equation}
\label{eq:shorttimeBilinearStrichartzEstimate}
\Vert P_N e^{\pm it \varphi(\nabla/i)} u_0 P_K e^{\pm it \varphi(\nabla/i)} v_0 \Vert_{L^2_{t,x}([0,N^{-\alpha}],L_x^2(\mathbb{T}))} \lesssim_\varphi N^{-\alpha/2} \Vert P_N u_0 \Vert_{L^2} \Vert P_K v_0 \Vert_{L^2}.
\end{equation}
This is sufficient to derive an $L^2_{t,x}$-estimate for finite times by gluing together the short time intervals:
\begin{proposition}
\label{prop:bilinearFiniteTimeEstimate}
Suppose that $\varphi$ satisfies \eqref{eq:transversalityCondition} and let $K \ll N, K, N \in 2^{\mathbb{N}}$. Then, we find the following estimate to hold:
\begin{equation}
\label{eq:finiteTimeBilinearEstimateTransversality}
\Vert P_N e^{\pm it \varphi(\nabla/i)} u_0 P_K e^{\pm it \varphi(\nabla/i)} v_0 \Vert_{L^2_{t,x}(I \times \mathbb{T})} \lesssim_\varphi |I|^{1/2} \Vert P_N u_0 \Vert_{L^2} \Vert P_K v_0 \Vert_{L^2},
\end{equation}
whenever $I \subseteq \mathbb{R}$ is a compact time interval with $|I| \gtrsim N^{-\alpha}$.
\end{proposition}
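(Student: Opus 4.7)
The strategy is to tile $I$ by translates of the shorttime interval $[0,N^{-\alpha}]$, apply the hypothesis \eqref{eq:shorttimeBilinearStrichartzEstimate} on each piece after using the group property of the linear flow to reset the initial data, and then add up the contributions. Since the estimate is quadratic and we work in $L^2_{t,x}$, the squared norms add, and the total number of subintervals is compensated exactly by the gain $N^{-\alpha}$ in the shorttime estimate.

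More concretely, I would partition $I = \bigcup_{j=1}^M I_j$ into essentially disjoint intervals $I_j = [t_j, t_j + N^{-\alpha}]$ of length $\sim N^{-\alpha}$, so that $M \lesssim |I| N^\alpha$ (here the assumption $|I| \gtrsim N^{-\alpha}$ is used so that no rounding loss occurs). For each $j$ set $\tilde{u}_{0,j} = e^{\pm i t_j \varphi(\nabla/i)} u_0$ and $\tilde{v}_{0,j} = e^{\pm i t_j \varphi(\nabla/i)} v_0$. Because $P_N$ commutes with the propagator and the flow has the group property, on $I_j$ we may rewrite
\begin{equation*}
P_N e^{\pm i t \varphi(\nabla/i)} u_0 \cdot P_K e^{\pm i t \varphi(\nabla/i)} v_0 = P_N e^{\pm i (t-t_j) \varphi(\nabla/i)} \tilde{u}_{0,j} \cdot P_K e^{\pm i (t-t_j) \varphi(\nabla/i)} \tilde{v}_{0,j}.
\end{equation*}
After the time translation $t \mapsto t - t_j$, hypothesis \eqref{eq:shorttimeBilinearStrichartzEstimate} applies and gives
\begin{equation*}
\Vert P_N e^{\pm it\varphi(\nabla/i)} u_0 P_K e^{\pm it\varphi(\nabla/i)} v_0 \Vert_{L^2_{t,x}(I_j\times\T)}^2 \lesssim_\varphi N^{-\alpha} \Vert P_N \tilde{u}_{0,j} \Vert_{L^2}^2 \Vert P_K \tilde{v}_{0,j}\Vert_{L^2}^2.
\end{equation*}
Since $\varphi$ is real-valued, $e^{it\varphi(\nabla/i)}$ is unitary on $L^2(\T)$, so $\Vert P_N \tilde{u}_{0,j}\Vert_{L^2} = \Vert P_N u_0\Vert_{L^2}$ and similarly for $v$.

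Summing the squared $L^2_{t,x}$-norms over the $M$ subintervals,
\begin{equation*}
\Vert P_N e^{\pm it\varphi(\nabla/i)} u_0 P_K e^{\pm it\varphi(\nabla/i)} v_0\Vert_{L^2_{t,x}(I\times\T)}^2 \lesssim_\varphi M \cdot N^{-\alpha} \Vert P_N u_0\Vert_{L^2}^2 \Vert P_K v_0\Vert_{L^2}^2 \lesssim_\varphi |I| \Vert P_N u_0\Vert_{L^2}^2 \Vert P_K v_0\Vert_{L^2}^2,
\end{equation*}
and extracting the square root yields \eqref{eq:finiteTimeBilinearEstimateTransversality}. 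There is no real obstacle here; the only point requiring care is the time translation argument together with $L^2$-unitarity, which ensures that the number of subintervals $M \sim |I| N^\alpha$ is matched exactly by the per-interval factor $N^{-\alpha}$, so that the final bound depends only on $|I|^{1/2}$ and not on $N$ or $\alpha$.
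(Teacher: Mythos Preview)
Your proof is correct and follows essentially the same approach as the paper: partition $I$ into disjoint intervals of length $\sim N^{-\alpha}$, apply the shorttime estimate \eqref{eq:shorttimeBilinearStrichartzEstimate} on each, and sum the squared norms using $\#I_j \sim |I|N^{\alpha}$. You are simply more explicit than the paper about the time translation via the group property and the unitarity of the flow, which the paper leaves implicit.
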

\begin{proof}
Let $I = \bigcup_j I_j$, $|I_j| \sim N^{-\alpha}$, where the $I_j$ are disjoint. Then,
\begin{equation*}
\begin{split}
\text{lhs}\eqref{eq:finiteTimeBilinearEstimateTransversality}^2 &\lesssim \sum_{I_j} \Vert P_N e^{\pm it \varphi(\nabla/i)} u_0 P_K e^{\pm it \varphi(\nabla/i)} v_0 \Vert_{L^2_{t,x}(I_j \times \mathbb{T})}^2 \\
&\lesssim (\# I_j) N^{-\alpha} \Vert P_N u_0 \Vert_{L^2}^2 \Vert P_K v_0 \Vert_{L^2}^2
\end{split}
\end{equation*}
and the claim follows from $\# I_j \sim |I| N^\alpha$. 
\end{proof}
Invoking Proposition \ref{prop:orthogonalityWellposedness} together with Propositions \ref{prop:generalizedGalileanInvariance} or \ref{prop:bilinearFiniteTimeEstimate}, the below theorem follows:
\begin{theorem}
\label{thm:localWellposednessGeneralizedNLS}
Suppose that $\varphi \in C^2(\mathbb{R}^n,\mathbb{R})$ satisfies $(\mathcal{E}^k(C_{\varphi}))$. Then, there is $s_0(n,k)$ such that \eqref{eq:nonlinearSEQ} is locally well-posed for $s>s_0(n,k)$.\\
Let $n=1$ and suppose that $\varphi$ satisfies \eqref{eq:transversalityCondition}. Then, there is $s_0=s_0(\varphi)$ such that \eqref{eq:nonlinearSEQ} is locally well-posed for $s>s_0(\varphi)$.
\end{theorem}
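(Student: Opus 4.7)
The plan is to recognize that Theorem \ref{thm:localWellposednessGeneralizedNLS} is essentially a direct consequence of Proposition \ref{prop:orthogonalityWellposedness}, once we supply the requisite bilinear $L^2_{t,x}$ estimate \eqref{eq:bilinearStrichartzEstimateI}. Both of the hypotheses on $\varphi$ provide such an estimate via Propositions \ref{prop:generalizedGalileanInvariance} and \ref{prop:bilinearFiniteTimeEstimate}, so the proof is largely a matter of reading off the exponent and matching the frequency order.

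For the first assertion, assume $(\mathcal{E}^k(C_\varphi))$ and, by symmetry in $N,K$, order the frequencies so $K \leq N$, i.e.\ $\min(N,K)=K$. Proposition \ref{prop:generalizedGalileanInvariance} then gives \eqref{eq:BilinearStrichartzEstimateII} with a factor of $K^{2s}$ for every $s > s(n,k)$, which is exactly \eqref{eq:bilinearStrichartzEstimateI} with $s_0 = 2s$. Setting $s_0(n,k) := 2s(n,k)$ and invoking Proposition \ref{prop:orthogonalityWellposedness} yields local well-posedness of \eqref{eq:nonlinearSEQ} for $s > s_0(n,k)$.

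For the second assertion take $n=1$ and assume \eqref{eq:transversalityCondition}. When $K \ll N$, Proposition \ref{prop:bilinearFiniteTimeEstimate} gives \eqref{eq:bilinearStrichartzEstimateI} with \emph{no} frequency loss, since $|I| \gtrsim 1 \geq N^{-\alpha}$; this trivially implies the required form with any $s_0 > 0$ because $\min(N,K)\geq 1$. The only point requiring attention is the diagonal regime $K \sim N$, which Proposition \ref{prop:bilinearFiniteTimeEstimate} does not cover. Here I would close it with a crude H\"older--Sobolev bound: using $H^{1/2+\varepsilon}(\T) \hookrightarrow L^\infty(\T)$ and mass conservation,
\begin{equation*}
\Vert P_N e^{\pm it\varphi(\nabla/i)}u_0 \cdot P_K e^{\pm it\varphi(\nabla/i)}v_0 \Vert_{L^2_{t,x}(I\times \T)} \lesssim |I|^{1/2} N^{1/2+\varepsilon} \Vert P_N u_0\Vert_{L^2}\Vert P_K v_0\Vert_{L^2},
\end{equation*}
which on the diagonal equals $\min(N,K)^{1/2+\varepsilon}$ times the required norms. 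Taking the worst exponent across the two regimes and applying Proposition \ref{prop:orthogonalityWellposedness} completes the proof with, say, $s_0(\varphi) = \tfrac{1}{2}+\varepsilon$.

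The genuine analytic content---decoupling, the generalized Galilean transformation, and the short-time transversality bound---has been absorbed into the earlier propositions, so no substantive obstacle remains here. The only subtlety worth flagging is that Proposition \ref{prop:bilinearFiniteTimeEstimate} is stated for $K \ll N$ and must be supplemented in the diagonal block $K \sim N$; the cheap H\"older--Sobolev estimate above suffices because Proposition \ref{prop:orthogonalityWellposedness} tolerates any finite $s_0 > 0$.
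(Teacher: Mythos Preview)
Your argument is structurally the same as the paper's: verify the bilinear hypothesis \eqref{eq:bilinearStrichartzEstimateI} of Proposition~\ref{prop:orthogonalityWellposedness}, using Proposition~\ref{prop:generalizedGalileanInvariance} under $(\mathcal{E}^k(C_\varphi))$ and Proposition~\ref{prop:bilinearFiniteTimeEstimate} under \eqref{eq:transversalityCondition}, and treat the diagonal block $K\sim N$ separately. For the first assertion your reduction is even slightly cleaner than the paper's, which splits into $K\ll N$ and $K\sim N$ and invokes H\"older plus Proposition~\ref{prop:linearStrichartzEstimates} in the latter case, whereas you simply apply Proposition~\ref{prop:generalizedGalileanInvariance} uniformly.

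The one substantive divergence is your treatment of the diagonal block in the one-dimensional transversal case. You close $K\sim N$ with the Sobolev embedding $H^{1/2+\varepsilon}(\T)\hookrightarrow L^\infty(\T)$, which is correct and yields $s_0(\varphi)=\tfrac12+\varepsilon$. The paper instead applies H\"older together with the linear $L^4_{t,x}$-Strichartz estimate of Proposition~\ref{prop:linearStrichartzEstimates} (tacitly assuming the curvature hypothesis $(\mathcal{E}^0(\psi))$ in addition to \eqref{eq:transversalityCondition}), obtaining $s_0(\varphi)=2s_0$ with $s_0$ the linear Strichartz exponent. Your route has the virtue of using only the hypothesis actually stated in the theorem, but it throws away the entire gain from decoupling: for $\varphi(\xi)=|\xi|^a$ the paper's bound gives $s_0=\tfrac{2-a}{4}$, which is sharp up to the endpoint, while yours gives only the trivial $s_0=\tfrac12$. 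Since the point of the section is precisely to exhibit the sharp thresholds coming from $\ell^2$-decoupling, you should replace the Sobolev step by H\"older and Proposition~\ref{prop:linearStrichartzEstimates}.
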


Finally, we give examples: In one dimension we treat the fractional Schr\"odinger equation
\begin{equation}
\label{eq:fractionalSEQ}
\left\{\begin{array}{cl}
i\partial_t u + D^a u &= \pm |u|^2 u ,  \; (t,x) \in \mathbb{R} \times \mathbb{T}, \\
u(0) &= u_0 \in H^s(\mathbb{T}), \end{array} \right.
\end{equation}
where $D= (-\Delta)^{1/2}$.\\
Theorem \ref{thm:localWellposednessGeneralizedNLS} yields uniform local well-posedness for $s> \frac{2-a}{4}$, $1 <a < 2$, which is presumably sharp up to endpoints as discussed in \cite{ChoHwangKwonLee2015}, where the endpoint $s= \frac{2-a}{4}$ was covered by resonance considerations.

For $0 < a < 1$, Dinh \cite{Dinh2017} showed local well-posedness for $s> \frac{2-a}{4}$ via Strichartz estimates for fractional Schr\"odinger equations on compact manifolds. In \cite{Dinh2017} short-time arguments were used to derive Strichartz estimates on arbitray compact manifolds. These estimates we can improve on tori for $1<a<2$ because we do not have to sum up frequency dependent time intervals. Since the derived estimates essentially resemble the estimates on Euclidean space (cf. \cite{HongSire2015}), we conjecture the estimates to be sharp up to endpoints.

We also discuss hyperbolic Schr\"odinger equations. The well-posedness result from \cite{Wang2013,GodetTzvetkov2012} is recovered for the hyperbolic nonlinear Schr\"odinger equation in two dimensions, which is known to be sharp up to endpoints. Generalizing the example, which probes sharpness in higher dimensions, indicates that there is only a significant difference between hyperbolic and elliptic Schr\"odinger equations in low dimensions.

This note is structured as follows: In Section \ref{section:LinearStrichartzEstimates} we prove linear Strichartz estimates utilizing $\ell^2$-decoupling, in Section \ref{section:BilinearStrichartzEstimates} we discuss bilinear Strichartz estimates without loss in the high frequency, and in Section \ref{section:WellposednessGeneralizedSEQ} the implied well-posedness results for generalized cubic nonlinear Schr\"odinger equations are discussed.
\section{Linear Strichartz Estimates}
\label{section:LinearStrichartzEstimates}
We prove Proposition \ref{prop:linearStrichartzEstimates} generalizing the proofs from \cite{BourgainDemeter2015,BourgainDemeter2017}:
\begin{proof}[Proposition \ref{prop:linearStrichartzEstimates}]
Without loss of generality let $I=[0,T]$. First, let $p \geq \frac{2(n+2-k)}{n-k}$ and compute
\begin{equation*}
\begin{split}
\text{lhs}\eqref{eq:hyperbolicStrichartzEstimates}^p &= \int_{\substack{0 \leq x_1, \ldots, x_n \leq 2 \pi,\\ 0 \leq t \leq T}} \left| \sum_{|\xi| \sim N} e^{i(x.\xi+t \varphi(\xi))} \hat{u}_0(\xi) \right|^p dx dt \\
&\sim \frac{N^{-(n+2)}}{\psi(N)} \int_{\substack{0 \leq x_1, \ldots, x_n \leq N, \\ 0 \leq t \leq TN^2 \psi(N)}} \left| \sum_{\substack{|\xi| \sim 1, \xi \in \mathbb{Z}^n/N}} e^{i(x.\xi + \frac{t}{N^2 \psi(N)}\varphi(N\xi))} \hat{u}_0(N\xi) \right|^p dx dt.
\end{split}
\end{equation*}

We distinguish between $\psi(N) \ll 1$ and $\psi(N) \gtrsim 1$. In the latter case, we use periodicity in space to find for the above display
\begin{equation*}
\sim \frac{N^{-(n+2)}}{(TN \psi(N))^n \psi(N)} \int_{\substack{0 \leq x_1, \ldots, x_n \leq T N^2 \psi(N),\\ 0 \leq t \leq TN^2 \psi(N)}} \left| \sum_{\substack{|\xi| \sim 1,\\ \xi \in \mathbb{Z}^n/N}} \hat{u}_0(N \xi) e^{i(x.\xi + \frac{t}{N^2 \psi(N)} \varphi(N\xi))} \right|^p dx dt.
\end{equation*}

This expression is amenable to the discrete $L^2$-restriction theorem \cite[Theorem~2.1,~p.~354]{BourgainDemeter2015} or the variant for hyperboloids because $TN^2 \psi(N) \gtrsim N^2$, and the frequency points are separated of size $\frac{1}{N}$ and the eigenvalues of $\frac{\varphi(N \cdot)}{N^2 \psi(N)}$ are approximately one.

Hence, we have the following estimate uniform in $\varphi$ (the dependence is encoded in $\psi(N)$, which drops out in the ultimate estimate):
\begin{equation*}
\begin{split}
\text{lhs}\eqref{eq:hyperbolicStrichartzEstimates}^p &\lesssim_\varepsilon \frac{N^{-(n+2)}}{(TN \psi(N))^n \psi(N)} (TN^2 \psi(N))^{n+1} N^{\left( \frac{n}{2} - \frac{n+2}{p} \right)p + \varepsilon} \Vert P_N u_0 \Vert^p_{2} \\
&\lesssim T N^{\left( \frac{n}{2} - \frac{n+2}{p} \right)p + \varepsilon} \Vert P_N u_0 \Vert_2^p.
\end{split}
\end{equation*}

Next, suppose that $\psi(N) \ll 1$. In this case we can not avoid loss of derivatives in general. Following along the above lines, we find for $p \geq \frac{2(n+2-k)}{n-k}$
\begin{equation*}
\begin{split}
\text{lhs}\eqref{eq:hyperbolicStrichartzEstimates}^p &\sim \frac{N^{-(n+2)}}{\psi(N)} \int_{\substack{0 \leq x_1, \ldots, x_n \leq N,\\ 0 \leq t \leq TN^2 \psi(N)}} \left| \sum_{\substack{|\xi| \sim 1,\\ \xi \in \mathbb{Z}^n/N}} e^{i(x.\xi+ t \frac{\varphi(N \xi)}{N^2 \psi(N)})} \hat{u}_0(N \xi) \right|^p dx dt \\
&\lesssim \frac{N^{-(n+2)}}{(NT)^n \psi(N)} \int_{\substack{0 \leq x_1, \ldots, x_n \leq TN^2,\\ 0 \leq t \leq TN^2}} \left| \sum e^{i(x.\xi + \frac{t \varphi(N\xi)}{N^2 \psi(N)})} \hat{u}_0(N\xi) \right|^p dx dt \\
&\lesssim_\varepsilon \frac{T}{\psi(N)} N^{\left( \frac{n}{2} - \frac{n+2}{p} \right)p + \varepsilon} \Vert P_N u_0 \Vert_2^p,
\end{split}
\end{equation*}
which yields the claim.
\end{proof}

As an example consider Strichartz estimates for the free fractional Schr\"odinger equation
\begin{equation}
\label{eq:freeFractionalSEQ}
\left\{\begin{array}{cl}
i\partial_t u + D^a u &= 0 ,  \; (t,x) \in \mathbb{R} \times \mathbb{T}^n, \\
u(0) &= u_0. \end{array} \right.
\end{equation}

The phase function $\varphi(\xi) = |\xi|^a, \; 1 < a < 2$ is elliptic and the lack of higher differentiability in the origin is not an issue because low frequencies can always be treated with Bernstein's inequality. $\varphi$ satisfies $(\mathcal{E}^0(\psi))$ with $\psi(N)=N^{a-2}$, hence we find by virtue of Proposition \ref{prop:linearStrichartzEstimates}
\begin{equation}
\label{eq:StrichartzEstimatesFractionalSEQ}
\Vert e^{it D^a} u_0 \Vert_{L^4_{t,x}(I \times \mathbb{T}^n)} \lesssim_{n,a,s} |I|^{1/4} \Vert u_0 \Vert_{H^s}, \quad s> s_0 =
\begin{cases}
\frac{2-a}{8}, \quad n=1,\\
\frac{2-a}{4} + \left( \frac{n}{2} - \frac{n+2}{4} \right), \quad \text{else}.
\end{cases}
\end{equation}

To find the $L^4_{t,x}$-estimate in one dimension, we interpolate the $L^6_{t,x}$-endpoint estimate with the trivial $L^2_{t,x}$-estimate.

In case $n=1, \; 1<a<2$, this recovers the Strichartz estimates from \cite{DemirbasErdoganTzirakis2016}, and for $0<a<1$, this estimate was proved in \cite{Dinh2017}. For $n \geq 2, \; 1< a <2$ this estimate seems to be new. Comparing with the results in Euclidean space (cf. \cite{HongSire2015}), the estimates are presumably sharp up to endpoints.
\section{Bilinear Strichartz estimates and transversality}
\label{section:BilinearStrichartzEstimates}
The argument from Section \ref{section:LinearStrichartzEstimates} admits bilinearization provided that the dispersion relation satisfies \eqref{eq:GlobalEigenvalueBounds}. This generalizes Galilean invariance, which was previously used to infer a bilinear estimate with no loss in the high frequency (cf. \cite{Bourgain1993FourierRestrictionNormPhenomenaI,Wang2013}).
\begin{proof}[Proposition \ref{prop:generalizedGalileanInvariance}]
Let $P_N = \sum_{K_1} R_{K_1}$, where $R_K$ projects to cubes of sidelength $K$. Then, by means of almost orthogonality
\begin{equation*}
\begin{split}
&\quad \Vert P_N e^{it \varphi(\nabla/i)} u_0 P_K e^{it \varphi(\nabla/i)} v_0 \Vert_{L^2_{t,x}(I \times \mathbb{T}^n)}^2 \\
&\lesssim \sum_{K_1} \Vert R_{K_1} e^{it \varphi(\nabla/i)} u_0 P_K e^{it \varphi(\nabla/i)} v_0 \Vert^2_{L^2_{t,x}(I \times \mathbb{T}^n)}.
\end{split}
\end{equation*}

After applying H\"older's inequality, we are left with estimating two $L^4_{t,x}$-norms. Clearly, by Proposition \ref{prop:linearStrichartzEstimates},
\begin{equation*}
\Vert P_K e^{it \varphi(\nabla/i)} v_0 \Vert_{L^4_{t,x}(I \times \mathbb{T}^n)} \lesssim_{\varphi,s} K^{s} \Vert P_K v_0 \Vert_{L^2}
\end{equation*}
provided that $s > s(n,\sigma_\varphi)$.

To treat the other term, let $\xi_0$ denote the center of the cube $Q_{K_1}$ onto which $R_{K_1}$ is projecting in frequency space. Following along the above lines, we write
\begin{equation*}
\begin{split}
&\quad \Vert R_{K_1} e^{it \varphi(\nabla/i)} u_0 \Vert^4_{L^4_{t,x}(I \times \mathbb{T}^n)} \\
 &= \int_{\substack{0 \leq x_1, \ldots, x_n \leq 2 \pi,\\ 0 \leq t \leq T}} \left| \sum_{\xi \in Q_{K_1}} e^{i(x.\xi+t \varphi(\xi))} \hat{u}_0(\xi) \right|^4 dx dt \\
&= \int_{\substack{0 \leq x_1, \ldots, x_n \leq 2 \pi,\\ 0 \leq t \leq T}} \left| \sum_{|\xi^\prime| \leq K} \hat{u}_0(\xi+\xi^\prime) e^{i(x.(\xi_0+\xi^\prime) + t \varphi(\xi_0+\xi^\prime))} \right|^4 dx dt \\
&= \int_{\substack{0 \leq x_1, \ldots, x_n \leq 2 \pi,\\ 0 \leq t \leq T}} \left| \sum_{|\xi^\prime| \leq K} e^{i((x+t \nabla \varphi(\xi_0)).\xi^\prime+t \psi_{\xi_0}(\xi^\prime))} \hat{w}_0(\xi^\prime) \right|^4 dx dt \\
&= \Vert P_{\leq K_1} e^{it \psi_{\xi_0}(\nabla/i)} w_0(x+t \nabla \varphi(\xi_0)) \Vert^4_{L^4(I \times \mathbb{T}^n)},
\end{split}
\end{equation*}
where $\psi_{\xi_0}(\xi^\prime) = \varphi(\xi_0+\xi^\prime) - \varphi(\xi_0) - \nabla \varphi(\xi_0).\xi^\prime$.

After breaking $\Vert P_{\leq K} e^{it \psi_{\xi_0}(\nabla/i)} w_0 \Vert_{L^4_{t,x}(I \times \mathbb{T}^n)} \leq \sum_{1 \leq L \leq K} \Vert P_L e^{it \psi_{\xi_0}(\nabla/i)} w_0 \Vert_{L^4}$, the single expressions are amenable to Proposition \ref{prop:linearStrichartzEstimates}. Indeed, the size of the moduli of the eigenvalues of $D^2 \psi_{\xi_0}$ are approximately independent of the frequencies. Hence,
\begin{equation*}
	\Vert P_L e^{it \psi_{\xi_0}(\nabla/i)} w_0 \Vert_{L^4_{t,x}(I \times \mathbb{T}^n)} \lesssim_{\varepsilon,C_\varphi} L^{s(n,k)+\varepsilon} \Vert P_L w_0 \Vert_{L^2}.
\end{equation*}
Carrying out the sum and the relation of $u_0$ and $w_0$, we find 
\begin{equation*}
\Vert P_{\leq K} e^{it \psi_{\xi_0}(\nabla/i)} w_0 \Vert_{L^4(I \times \mathbb{T}^n)} \lesssim_{\varepsilon,\varphi} K^{s(n,k)+\varepsilon} \Vert R_{K_1} u_0 \Vert_{L^2}
\end{equation*}
By almost orthogonality,
\begin{equation*}
\left( \sum_{K_1} \Vert R_{K_1} u_0 \Vert^2_{L^2} \right)^{1/2} \lesssim \Vert P_N u_0 \Vert_{L^2}^2,
\end{equation*}
and the proof is complete.
\end{proof}
In one dimension (and for certain phase functions also in higher dimensions, see \cite{Schippa2020}) transversality \eqref{eq:transversalityCondition} of the phase function allows us to derive Proposition \ref{prop:bilinearFiniteTimeEstimate}, which improves the above estimate.
\section{Local well-posedness of generalized cubic Schr\"odinger equation}
\label{section:WellposednessGeneralizedSEQ}
Deploying Proposition \ref{prop:orthogonalityWellposedness} by use of the estimates from Section \ref{section:LinearStrichartzEstimates} and \ref{section:BilinearStrichartzEstimates}, we can conclude the proof of Theorem \ref{thm:localWellposednessGeneralizedNLS}:
\begin{proof}
First, suppose that $\varphi$ satisfies \eqref{eq:GlobalEigenvalueBounds}. In case $K \ll N$, Proposition \ref{prop:generalizedGalileanInvariance} yields the estimate
\begin{equation}
\label{eq:BilinearStrichartzEstimateIII}
\begin{split}
&\quad \Vert P_N e^{\pm it \varphi(\nabla/i)} u_0 P_K e^{\pm it \varphi(\nabla/i)} v_0 \Vert_{L^2_{t,x}(I \times \mathbb{T}^n)} \\
&\lesssim_{\varepsilon,\varphi} |I|^{1/2} K^{2s(n,\sigma_\varphi)+\varepsilon} \Vert P_N u_0 \Vert_{L^2} \Vert P_K v_0 \Vert_{L^2}.
\end{split}
\end{equation}

For $K \sim N$, the claim follows after applying H\"older's inequality and Proposition \ref{prop:linearStrichartzEstimates}. From Proposition \ref{prop:orthogonalityWellposedness} we find \eqref{eq:nonlinearSEQ} to be locally well-posed provided that $s>2s_0(n,\sigma_\varphi)$. In case $\varphi$ satisfies $(\mathcal{E}^0(\psi(N)))$ and \eqref{eq:transversalityCondition}, we have the improved bilinear bound
\begin{equation*}
\Vert P_N e^{\pm it \varphi(\nabla/i)} u_0 P_K e^{\pm it \varphi(\nabla/i)} v_0 \Vert_{L^2_{t,x}(I \times \mathbb{T})} \lesssim_{\varphi} |I|^{1/2} \Vert P_N u_0 \Vert_{L^2} \Vert P_K v_0 \Vert_{L^2}
\end{equation*}
due to Proposition \ref{prop:bilinearFiniteTimeEstimate}. Hence, only loss stems from $High \times High \rightarrow High$-interaction, where $K \sim N$: By means of Proposition \ref{prop:linearStrichartzEstimates} and H\"older's inequality, we derive
\begin{equation*}
\Vert P_N e^{\pm it \varphi(\nabla/i)} u_0 P_K e^{\pm it \varphi(\nabla/i)} v_0 \Vert_{L^2_{t,x}(I \times \mathbb{T})} \lesssim_\varphi K^{2s} |I|^{1/2} \Vert P_N u_0 \Vert_{L^2} \Vert P_K v_0 \Vert_{L^2}
\end{equation*}
and by Proposition \ref{prop:orthogonalityWellposedness} we find \eqref{eq:nonlinearSEQ} to be locally well-posed for $s>2s_0(\varphi)$.
\end{proof}
We turn to examples: As discussed in Section \ref{section:LinearStrichartzEstimates}, the phase functions $\varphi(\xi) = |\xi|^a \quad (0<a<2, \; a \neq 1)$ do not satisfy $(\mathcal{E}^0(C_\varphi))$, but $(\mathcal{T}_{a-1})$ for $1<a<2$. For $0<a<1$, $K \ll N$, $|I| \gtrsim 1$, we have the following bilinear Strichartz estimate
\begin{equation*}
\Vert P_N e^{\pm it D^a} u_0 P_K e^{\pm it D^a} v_0 \Vert_{L^2_{t,x}(I \times \mathbb{T})} \lesssim |I|^{1/2} K^{\frac{1-a}{2}} \Vert P_N u_0 \Vert_{L^2} \Vert P_K v_0 \Vert_{L^2},
\end{equation*}
which can be proved like in \cite{MoyuaVega2008,Schippa2020}.

Consequently, by (the proof of) Theorem \ref{thm:localWellposednessGeneralizedNLS} we find \eqref{eq:fractionalSEQ} to be locally well-posed for $s> \frac{2-a}{4}$. As discussed in \cite{ChoHwangKwonLee2015}, this is likely to be the threshold of uniform local well-posedness, which indicates that the linear Strichartz estimates from Section \ref{section:LinearStrichartzEstimates} are in this case sharp up to endpoints.

In Euclidean space fractional Schr\"odinger equations were considered in \cite{HongSire2015}. Key ingredient to well-posedness are linear and bilinear Strichartz estimates, which hold globally in time due to dispersive effects. On the circle we can reach the same regularity like in \cite{HongSire2015} up to the endpoint. Although the linear Strichartz estimates might well be sharp in higher dimensions, satisfactory bilinear $L^2_{t,x}$-Strichartz estimates appear to be beyond the methods of this paper so that we can not prove non-trivial well-posedness results in higher dimensions. Progress presumably requires an additional angular decomposition (cp. \cite{CollianderKeelStaffilaniTakaokaTao2008}) to improve control of the resonance function.

For hyperbolic phase functions, Theorem \ref{thm:localWellposednessGeneralizedNLS} recovers the results from \cite{Wang2013,GodetTzvetkov2012}, where essentially sharp local well-posedness of
\begin{equation}
\label{eq:hyperbolicSEQI}
\left\{\begin{array}{cl}
i\partial_t u + ( \partial_{x_1}^2 - \partial_{x_2}^2) u &= \pm |u|^2 u ,  \; (t,x) \in \mathbb{R} \times \mathbb{T}^2, \\
u(0) &= u_0 \end{array} \right.
\end{equation}
was proved for $s>1/2$. Notably, due to subcriticality of the $L^4_{t,x}$-Strichartz estimate already for the hyperbolic equations
\begin{equation}
\label{eq:hyperbolicSEQII}
\left\{\begin{array}{cl}
i\partial_t u + (\partial_{x_1}^2 - \partial_{x_2}^2 + \partial_{x_3}^2) u &= \pm |u|^2 u ,  \; (t,x) \in \mathbb{R} \times \mathbb{T}^3, \\
u(0) &= u_0, \end{array} \right.
\end{equation}
and
\begin{equation}
\label{eq:hyperbolicSEQIII}
\left\{\begin{array}{cl}
i\partial_t u + (\partial_{x_1}^2 - \partial_{x_2}^2 + \partial_{x_3}^2 - \partial_{x_4}^2) u &= \pm |u|^2 u ,  \; (t,x) \in \mathbb{R} \times \mathbb{T}^4, \\
u(0) &= u_0, \end{array} \right.
\end{equation}
the (essentially sharp) Strichartz estimates yield the same well-posedness results as for the elliptic counterparts.

Firstly, recall the counterexample from \cite{Wang2013}, which showed $C^3$-ill-posedness of \eqref{eq:hyperbolicSEQI} for $s<1/2$. As initial data consider
\begin{equation*}
\phi_N(x) = N^{-1/2} \sum_{|k|\leq N} e^{ik x_1} e^{-ik x_2},
\end{equation*}
which satisfies $\Vert \phi_N \Vert_{H^s} \sim N^s$ and $S[\phi_N](t):= e^{it(\partial_{x_1}^2 - \partial_{x_2}^2)} \phi_N = \phi_N$. This implies
\begin{equation*}
\left\Vert \int_0^T |S[\phi_N](s)|^2 S[\phi_N](s) ds \right\Vert_{H^s} = T \Vert |\phi_N|^2 \phi_N \Vert_{H^s} \gtrsim T N^{1+s}
\end{equation*}
For details on this estimate see \cite{Wang2013}.

The veracity of the estimate
\begin{equation*}
\left\Vert \int_0^T |S[\phi_N](s)|^2 S[\phi_N](s) ds \right\Vert_{H^s} \lesssim \Vert \phi_N \Vert_{H^s}^3 \quad (T \lesssim 1)
\end{equation*}
requires $s \geq 1/2$.

The same counterexample shows that $s \geq 1/2$ is required for $C^3$-well-posedness of \eqref{eq:hyperbolicSEQII}. This regularity is reached up to the endpoint by Theorem \ref{thm:localWellposednessGeneralizedNLS}.\\
When considering \eqref{eq:hyperbolicSEQIII}, we modify the above example to
\begin{equation*}
\phi_N(x) = N^{-1} \sum_{|k_1|,|k_2| \leq N} e^{i k_1 x_1} e^{-i k_1 x_2} e^{i k_2 x_3} e^{-i k_2 x_4},
\end{equation*}
which again satisfies $\Vert \phi_N \Vert_{H^s} \sim N^s$.\\
Carrying out the estimate for the first Picard iterate with the necessary modifications yields
\begin{equation*}
\left\Vert \int_0^T |S[\phi_N](s)|^2 S[\phi_N](s) ds \right\Vert_{H^s} = T \Vert |\phi_N|^2 \phi_N \Vert_{H^s} \gtrsim T N^{2+s},
\end{equation*}
which implies $C^3$-ill-posedness unless $s \geq 1$. This regularity is again obtained up to the endpoint by Theorem \ref{thm:localWellposednessGeneralizedNLS}.

Apparently, for other hyperbolic Schr\"odinger equations the $L^4_{t,x}$-Strichartz estimate also coincides with the elliptic $L^4_{t,x}$-estimate and modifications of the above counterexample yield lower thresholds than in the elliptic case. This indicates that the difference between elliptic and hyperbolic Schr\"odinger equations is only significant for lower dimensions.
\section*{Acknowledgements}
Financial support by the German Science Foundation (IRTG 2235) is gratefully acknowledged.
\bibliographystyle{plain}

\end{document}